\newcommand{\IZ}{{\mathbb Z}}
\renewcommand{\H}{\mathbb{H}}
\newcommand{\SL}{\mathrm{SL}}
\newcommand{\N}{\mathbb N}
\theoremstyle{plain}
\newtheorem{thm}{Theorem}[section]
\newtheorem*{thm*}{Theorem}
\newtheorem{lem}[thm]{Lemma}
\theoremstyle{definition}
\newtheorem{defn}[thm]{Definition}
\newtheorem*{lem*}{Lemma}
\newtheorem*{rem}{Remark}
\numberwithin{equation}{section}
\theoremstyle{remark}
\def\n{\nu}
\def\t{\tau}
\def\n{\nu}
\def\t{\tau}
\newcommand{\R}{\mathbb R}
\newcommand{\Z}{\mathbb Z}
\newcommand{\lrb}[1]{\left(#1\right)}
\setlist[itemize]{noitemsep, topsep=0pt}
\newcommand{\vast}{\bBigg@{3}}
\newcommand{\Vast}{\bBigg@{5}}
\title{Traces of partition Eisenstein series and almost holomorphic modular forms}
\author{Kathrin Bringmann}
\author[B. Pandey]{Badri Vishal Pandey}
\address{University of Cologne, Department of Mathematics and Computer Science,
	Weyertal 86-90,
	50931 Cologne, Germany}
\email{kbringma@math.uni-koeln.de}
\email{bpandey@math.uni-koeln.de}
\subjclass[2020]{11F03, 11A25, 11F50, 11F11}
\keywords{Eisenstein series, Jacobi forms, modular forms, quasimodular forms}
\begin{document}
	\maketitle
	\begin{abstract}
		Recently, Amderberhan, Griffin, Ono, and Singh started the study of ``traces of partition Eisenstein series'' and used it to give explicit formulas for many interesting functions. In this note we determine the precise spaces in which they lie, find modular completions, and show how they are related via operators.
	\end{abstract}
	\section{Introduction and Statement of results}
	In \cite{AGOS} Amderberhan--Griffin--Ono--Singh introduced the functions 
	\[e_k\left(\Lambda_\tau(s)\right):=\sum_{\substack{\omega_1,\ldots,\omega_k\in\Lambda_\tau(s)\\ \text{distinct}}}\frac{1}{\omega_1\cdots\omega_k},\]
	for $k\in\N$ and $\t\in\H$, where
	\[
		\Lambda_\tau(s):=\left\{\omega^2|\omega|^{2s}:\omega\in\Lambda_\tau\setminus\{0\}\right\}.
	\]
	Here $\Lambda_\t := \Z+\Z\t$. Moreover, let 
	\begin{equation*}
		e_k(\Lambda_\tau(0)) := \lim_{s\to0^+} e_k(\Lambda_\tau(s)).
	\end{equation*}
	In Theorem 1.1 (1) of \cite{AGOS} it was shown that $e_k(\Lambda_\tau(0))$ is a weight $2k$ non-holomorphic modular form. In this note we make this statement precise and determine the space in which these lie and how they can be related via operators. Recall that the {\it lowering operator} is defined as, $\tau=u+iv$ throughout, $L := -2iv^2\frac\partial{\partial\overline\tau}$. An \emph{almost holomorphic modular form $\widehat f$} is a function on $\H$ that transforms like a modular form and is a polynomial in $\frac{1}{v}$ with holomorphic coefficients\footnote{See \cite[Subsection~5.3]{BGHZ2008} for an exposition on almost holomorphic modular forms.}. The degree of the polynomial is called the \emph{depth} of $\widehat{f}$. The holomorphic part $f$ of $\widehat f$ is called a \emph{quasimodular form}. Note that the quasimodular form can be obtained from the almost holomorphic modular form by\footnote{Here $\tau$ and $\overline\tau$ are considered as independent variables.}
	\begin{equation*}
		f(\tau) = \lim_{\overline{\t}\to-i\infty} \widehat f(\tau).
	\end{equation*}
	\begin{thm}\label{main}
		Let $k\in\N$.
		\begin{enumerate}[leftmargin=*,label=\rm(\arabic*)]
			\item We have that $e_k(\Lambda_\tau(0))$ is an almost holomorphic modular form of weight $2k$ and depth $k$.
			\item We have $e_0(\Lambda_\tau(0))=1$ and
			\[L\left(e_k(\Lambda_\tau(0))\right)=\frac{\pi}{2}e_{k-1}\left(\Lambda_\tau(0)\right).\]
			\item The (modified) generating function
			\begin{equation*}
				\phi(z;\tau) := e^{-\frac{\pi z^2}{2v}} \sum_{k\ge0} (-1)^k e_k(\Lambda_\tau(0)) z^{2k+1}
			\end{equation*}
			is a Jacobi form of weight $-1$ and index $\frac{1}{2}$.
		\end{enumerate}
	\end{thm}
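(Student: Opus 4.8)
The plan is to evaluate the generating series in closed form and recognise it as a ratio of a classical Jacobi theta function and a power of the Dedekind eta function, from which all transformation laws are immediate. The starting observation is that, by definition, $e_k(\Lambda_\tau(s))$ is the $k$-th elementary symmetric function in the numbers $\omega^{-2}|\omega|^{-2s}$ taken over the \emph{distinct} values $\omega^2|\omega|^{2s}$, i.e.\ over $\omega\in\Lambda_\tau\setminus\{0\}$ up to sign. For $s>0$ everything converges absolutely, so Newton's identities (together with the $\omega\mapsto-\omega$ symmetry, which replaces a sum over $\pm$-classes by half the full lattice sum) yield
\[
\sum_{k\ge0}(-1)^k e_k(\Lambda_\tau(s))\,z^{2k}=\exp\left(-\sum_{n\ge1}\frac{z^{2n}}{2n}\,G_{2n}(\tau;s)\right),\qquad G_{2n}(\tau;s):=\sum_{\omega\in\Lambda_\tau\setminus\{0\}}\frac{1}{\omega^{2n}|\omega|^{2ns}}.
\]

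Next I would let $s\to0^+$ coefficient-wise in $z$; this is legitimate because each power of $z$ involves only finitely many, individually convergent, quantities $G_{2n}(\tau;s)$. For $n\ge2$ the limit is the holomorphic Eisenstein series $G_{2n}(\tau)$, while for $n=1$ Hecke's trick produces the \emph{non-holomorphic} value $\widehat G_2(\tau):=G_2(\tau)-\tfrac{\pi}{v}$. Multiplying by $z$ and by the prefactor $e^{-\pi z^2/(2v)}$, the anomalous contribution $-\tfrac{z^2}{2}\cdot\bigl(-\tfrac{\pi}{v}\bigr)=\tfrac{\pi z^2}{2v}$ is cancelled exactly, leaving the \emph{holomorphic} expression
\[
\phi(z;\tau)=z\exp\left(-\frac{z^2}{2}G_2(\tau)-\sum_{n\ge2}\frac{z^{2n}}{2n}G_{2n}(\tau)\right)=e^{-\frac12 G_2(\tau)z^2}\,\sigma(z;\tau),
\]
where $\sigma(z;\tau)$ is the Weierstrass sigma function of $\Lambda_\tau$, whose logarithmic expansion is exactly $\log(\sigma/z)=-\sum_{n\ge2}\frac{z^{2n}}{2n}G_{2n}$.

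Finally I would invoke the classical identity $\sigma(z;\tau)=e^{\frac12 G_2(\tau)z^2}\,\theta_1(z;\tau)/\theta_1'(0;\tau)$ together with $\theta_1'(0;\tau)=2\pi\eta(\tau)^3$, which gives $\phi(z;\tau)=\theta_1(z;\tau)/(2\pi\eta(\tau)^3)$. Since $\theta_1$ is a Jacobi form of weight $\tfrac12$ and index $\tfrac12$ and $\eta^3$ is a modular form of weight $\tfrac32$, with multiplier systems that cancel in the quotient, $\phi$ is a Jacobi form of weight $\tfrac12-\tfrac32=-1$ and index $\tfrac12$; its elliptic transformation is the quasi-periodicity of $\theta_1$, and its modular transformation can be cross-checked directly from the weight-$2k$ modularity of the $e_k(\Lambda_\tau(0))$ from part (1) (a one-line computation using $\tau-\overline\tau=2iv$ shows the prefactor supplies precisely the index-$\tfrac12$ automorphy factor under $S$).

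The main obstacle is the bookkeeping around the $G_2$-regularization: one must verify that the $s\to0^+$ limit produces the non-holomorphic $\widehat G_2=G_2-\tfrac{\pi}{v}$ and that its $\tfrac{\pi}{v}$-anomaly is annihilated by the chosen prefactor. This hinges on the constant $\tfrac{\pi}{2v}$ in $e^{-\pi z^2/(2v)}$ matching $\tfrac12\cdot\tfrac{\pi}{v}$, i.e.\ on the factor $\tfrac12$ coming from summing over $\pm$-classes rather than all of $\Lambda_\tau\setminus\{0\}$; pinning down these normalizations (and justifying the interchange of the limit $s\to0^+$ with the exponential manipulation) is the only delicate point, after which the statement follows from standard facts about $\theta_1$ and $\eta$.
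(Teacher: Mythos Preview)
Your route to part~(3) is essentially the paper's: derive the exponential formula
\[
\sum_{k\ge0}(-1)^k e_k(\Lambda_\tau(0))z^{2k+1}
= z\exp\!\left(-\widehat G_2(\tau)\frac{(2\pi iz)^2}{2}-\sum_{k\ge2}G_{2k}(\tau)\frac{(2\pi iz)^{2k}}{(2k)!}\right),
\]
observe that the prefactor $e^{-\pi z^2/(2v)}$ strips the $\tfrac{1}{v}$-part of $\widehat G_2$, and identify the resulting holomorphic object with $\vartheta(z;\tau)/(-2\pi\eta^3(\tau))$. The only cosmetic difference is that you pass through the Weierstrass $\sigma$-function and the relation $\sigma=e^{\frac12 G_2 z^2}\theta_1/\theta_1'(0)$, whereas the paper invokes the equivalent identity of Zagier directly; both arguments are the same computation.

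What your proposal does not do is prove parts~(1) and~(2); you cite part~(1) as input rather than as something to establish. The paper uses the very exponential formula above for all three parts: applying $L$ to it (only $\widehat G_2$ is non-holomorphic, and $L(1/v)=-1$) gives $L(\phi^*)=-\tfrac{\pi}{2}z^2\phi^*$, hence $L(e_k)=\tfrac{\pi}{2}e_{k-1}$, which is~(2); and the modular transformation of $\widehat G_2,G_4,G_6,\dots$ yields the weight-$2k$ transformation of $e_k$, with depth~$k$ then following from~(2). You have all the ingredients on the table---you should simply add these two short paragraphs rather than assume~(1) as known.
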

	Next we look at the Tylor coefficients of a meromorphic Jacobi form (see Subsection~\ref{subsec:Jacobi-form} for the definition) with torsion divisors. Let $\phi(z;\t)$ be a meromorphic Jacobi form. The {\it divisor} of $\phi$ (with respect to $\t$) is the formal sum
	\begin{align*}
		\mathrm{Div}_\t(\phi):=\sum_{w\in \mathbb{C}\slash\Lambda_\t} a_w\cdot [w],
	\end{align*}
	where $a_w$ is the order of $\phi_\t(z):=\phi(z;\t)$ at the point $w$ and $[w]$ is treated as symbol representing the point $w$ (see \cite[Chapter~II.3]{S1986} for more details on divisors). Note that this is a finite sum. A point $z=a\t+b\in \mathbb{C}\slash\Lambda_\t$ is a {\it torsion point} if $a$, $b\in\mathbb Q$. We define the \emph{Eisenstein-theta function for the divisor} $D=\mathrm{Div}_\t(\phi)$ as
	\begin{align*}
		G_{k,D}(\t):=\sum_{w\in \mathbb{C}\slash \Lambda_\t} a_w G_{k,w}(\t).
	\end{align*}
	Here for a non-zero torsion point $w$
	\begin{align*}
		G_{k,w}(\t):=-\left[\lrb{\frac{1}{2\pi i}\frac{\partial}{\partial z}}^k \log\left(\left|\vartheta(z;\t)\right|^2\right) \right]_{z=w},
	\end{align*}
	where the theta function $\vartheta$ is defined in equation \eqref{E:JacobiTheta}.
	For convention set $G_{2k,0}(\t):=G_{2k}(\t)$ (see \eqref{eq:G-2k} for the definition of $G_{2k}(\t)$) and $G_{2k+1,0}(\t):=0$.	For a partition\footnote{Here and throughout we use $\lambda\vdash n$ for $(1^{m_1},2^{m_2},\cdots,n^{m_n})\vdash n$.} $\lambda\vdash n$, define the {\it $n$-th partition Eisenstein-theta function for the divisor $D$} as
	\begin{align*}
		\mathcal{G}_{D,\lambda}(\t):=G_{1,D}^{m_1}(\t)G_{2,D}^{m_2}(\t)\cdots G_{n,D}^{m_n}(\t).
	\end{align*}
	The {\it $n$-th partition Eisenstein-theta trace for $D$ and} $\psi:\mathcal{P}\to \mathbb{C}$, a function on partition, is defined as
	\begin{align}\label{eq:trace}
		\mathrm{Tr}_n(D,\psi;\t) := \sum_{\lambda\vdash n} \psi(\lambda) \mathcal{G}_{D,\lambda}(\t).
	\end{align}
	In Theorem~1.4 of \cite{AGOS}, it was shown that a meromorphic Jacobi form with torsion divisors $D$ can be expressed in terms of these partition Eisenstein-theta traces for $D$. To state this result for a partition $\lambda=(1^{m_1},2^{m_2},\ldots,n^{m_n})\vdash n$ we let
	\begin{align*}
		\psi_J(\lambda):=\frac{(-1)^{\ell(\lambda)}}{\prod_{j=1}^n m_j!j!^{m_j}},
	\end{align*}
	where $\ell(\lambda):=\sum_{j=1}^n m_j$ is the \emph{length} of $\lambda$.
	\begin{thm}\label{second}
		{\rm(\cite[Theorem 1.4]{AGOS})} Suppose that $\phi(z;\t)$ is a meromorphic Jacobi form of weight $k$ and index $m$, and torsion divisor $D$, and let $a$ be the order of $\phi(z;\t)$ at $z=0$. Then there exists a unique weakly holomorphic modular form $f_\phi$ of weight\footnote{Note that there is a typo in Theorem 1.4 of \cite{AGOS}: the weight there is $k-a$ instead of $k+a$.} $k + a$ for which, in a neighborhood around $0$,
		\begin{align*}
			\phi(z;\t)=f_\phi(\t)\sum_{n\ge 0} \mathrm{Tr}_n(D,\psi_J;\t)(2\pi i z)^{n+a}.
		\end{align*}
	\end{thm}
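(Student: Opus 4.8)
The plan is to rewrite the generating series of the traces as an explicit exponential, to recognize that exponential as a quotient of Jacobi theta functions whose divisor is $D$, and then to define $f_\phi$ as the quotient of $\phi$ by this theta expression. First I would dispatch the purely combinatorial step. The weight $\psi_J(\lambda)=(-1)^{\ell(\lambda)}/\prod_j m_j!\,j!^{m_j}$ is exactly the coefficient that appears when one expands an exponential, so writing $(2\pi i z)^n=\prod_{j}(2\pi i z)^{jm_j}$ and summing over all partitions $\lambda=(1^{m_1},2^{m_2},\ldots)$ yields
\[
\sum_{n\ge 0}\mathrm{Tr}_n(D,\psi_J;\t)\,(2\pi i z)^n=\prod_{j\ge 1}\exp\!\lrb{-\frac{G_{j,D}(\t)}{j!}(2\pi i z)^j}=\exp\!\lrb{-\sum_{j\ge 1}\frac{G_{j,D}(\t)}{j!}(2\pi i z)^j}.
\]

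Next I would identify this exponential with theta functions. By definition $G_{j,D}=\sum_w a_w G_{j,w}$ with $G_{j,w}(\t)=-[(\tfrac1{2\pi i}\partial_z)^j\log|\vartheta(z;\t)|^2]_{z=w}$; since $\partial_z$ annihilates the antiholomorphic factor $\overline{\vartheta(z;\t)}$, for $j\ge 1$ these are just the Taylor coefficients of $-\log\vartheta(z;\t)$ at $z=w$. Hence for a nonzero torsion point $w$, Taylor's theorem gives
\[
-\sum_{j\ge 1}\frac{G_{j,w}(\t)}{j!}(2\pi i z)^j=\log\vartheta(z+w;\t)-\log\vartheta(w;\t),
\]
so the $w$-factor of the exponential is $\vartheta(z+w;\t)/\vartheta(w;\t)$. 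For $w=0$ the convention $G_{2k,0}=G_{2k}$, $G_{2k+1,0}=0$ is precisely the regularized expansion of $\log(\vartheta(z;\t)/z)$ coming from the Weierstrass product, so after multiplication by the prefactor $(2\pi i z)^a$ the $w=0$ part contributes $\vartheta(z;\t)^a$ up to a nonzero function of $\t$. Collecting all $w$ (and using that a weight-$k$ Jacobi form satisfies $\phi(-z;\t)=(-1)^k\phi(z;\t)$, so that $D$ is symmetric under $w\mapsto-w$) gives
\[
\sum_{n\ge 0}\mathrm{Tr}_n(D,\psi_J;\t)\,(2\pi i z)^{n+a}=c(\t)\,\vartheta(z;\t)^a\prod_{w\ne 0}\lrb{\frac{\vartheta(z+w;\t)}{\vartheta(w;\t)}}^{a_w}=:\Theta_D(z;\t),
\]
a meromorphic function of $z$ whose divisor on $\mathbb{C}/\Lambda_\t$ is exactly $D$.

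Finally I would produce $f_\phi$. From the theta-quotient shape, $\Theta_D$ has the $z$-quasiperiodicity of a Jacobi form of index $\tfrac12\deg D=m$, each of the $\deg D=2m$ theta factors contributing index $\tfrac12$; granting (see below) that it also transforms in $\t$ with weight $-a$ (equivalently, that $\mathrm{Tr}_n$ has weight $n$ while $(2\pi i z)^{n+a}$ has weight $-(n+a)$), it is a meromorphic Jacobi form of weight $-a$ and index $m$ with divisor $D$. Setting $f_\phi:=\phi/\Theta_D$, the index is $m-m=0$; because $\phi$ and $\Theta_D$ share the divisor $D$, the quotient is holomorphic and nowhere vanishing in $z$, and an index-$0$ Jacobi form that is holomorphic in $z$ is independent of $z$. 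Thus $f_\phi=f_\phi(\t)$ transforms with weight $k-(-a)=k+a$, and it is only \emph{weakly} holomorphic since it may be meromorphic at the cusp. Uniqueness is immediate: $\mathrm{Tr}_0(D,\psi_J;\t)=1$, so $f_\phi(\t)$ is forced to equal the coefficient of $(2\pi i z)^a$ in the expansion of $\phi$.

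The main obstacle is the verification that $\Theta_D$ transforms as a genuine Jacobi form in $\t$, equivalently that each $G_{j,D}$ is modular of weight $j$. This is where the hypothesis that $D$ is a \emph{torsion} divisor is essential: one must invoke the transformation law of $\vartheta$ and of its logarithmic derivatives at torsion points, together with the fact that the divisor of the modular object $\phi$ is invariant under the $\SL_2(\IZ)$-action on the universal elliptic curve, so that the points $w$ and multiplicities $a_w$ are permuted compatibly. A secondary, purely bookkeeping, difficulty is the $w=0$ term: one must match the normalization of $G_{2k}$ in \eqref{eq:G-2k} with the Weierstrass/theta expansion at the origin and track the powers of $2\pi i$, so that the prefactor $(2\pi i z)^a$ exactly cancels the order-$a$ zero of $\vartheta(z;\t)^a$ and the leftover constant $c(\t)$ is correctly identified as a modular form.
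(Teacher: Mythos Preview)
The paper does not contain a proof of this statement; Theorem~\ref{second} is quoted from \cite[Theorem~1.4]{AGOS} and used as a black box in the proof of Theorem~\ref{Jacobi}. So there is no ``paper's own proof'' to compare your proposal against.

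On its own merits your outline is the natural one and is presumably close to the argument in \cite{AGOS}: the cycle-index identity (Lemma~\ref{lem:cycle-index} with $x_k=-G_{k,D}(\t)/(k-1)!$) turns the generating series into an exponential, the $w=0$ summand is handled exactly by \eqref{eq:theta}, and each $w\ne0$ summand exponentiates to $\vartheta(z+w;\t)/\vartheta(w;\t)$, yielding a theta-quotient $\Theta_D$ with the prescribed divisor; then $f_\phi:=\phi/\Theta_D$ is holomorphic and elliptic of index $0$, hence constant in $z$, and the weight count gives $k+a$. You correctly flag the one genuinely nontrivial step left open, namely the $\SL_2(\Z)$-behaviour of $\Theta_D$: since each $w=\alpha\t+\beta$ moves with $\t$, one must check that the modular action permutes the torsion points in a way compatible with the multiplicities $a_w$ (this is precisely where both ``torsion'' and ``$D=\mathrm{Div}_\t(\phi)$ for a Jacobi form $\phi$'' are used). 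One further bookkeeping point you should add on the elliptic side: matching the index-$m$ transformation of $\Theta_D$ needs not only $\deg D=2m$ but also the Abel-type condition $\sum_w a_w\,w\in\Lambda_\t$, which kills the residual character $e^{-2\pi i\lambda\sum_w a_w w}$ coming from the shifted thetas; this holds for divisors of Jacobi forms but deserves a sentence.
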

	We next determine the space in which $\mathrm{Tr}_n(D,\psi;\t)$ lie.
	For this, for $n\in\N$, we define (using the notation from \Cref{second})
	\begin{align}\label{eq:Tr-hat}
		\widehat{\mathrm{Tr}}_{n}(D,\psi_J;\t):= \sum_{0\le j\le \frac n2} \frac{\left(\frac{\pi m}v\right)^j}{j!} (2\pi i)^{n-2j+a} \operatorname{Tr}_{n-2j} (D,\psi_J;\tau).
	\end{align}
	\begin{thm}\label{Jacobi}\hspace{0cm}
		\begin{enumerate}[leftmargin=*,label=\rm(\arabic*)]
			\item The function $\widehat{\mathrm{Tr}}_{n}(D,\psi_J;\t)$ is a weight $n$ and depth at most $\lfloor\frac{n}{2}\rfloor$ almost holomorphic modular form with
			\begin{align*}
				\lim_{\overline{\t}\to-i\infty} \widehat{\mathrm{Tr}}_{n}(D,\psi_J;\t) = (2\pi i)^{n+a} \mathrm{Tr}_{n}(D,\psi_J;\t).
			\end{align*}
			\item We have 
			\begin{align*}
				\widehat{\mathrm{Tr}}_0(D,\psi_J;\t)& =  (2\pi i)^a, \quad \widehat{\mathrm{Tr}}_1(D,\psi_J;\t) = -(2\pi i)^{1+a}G_{1, D}(\t),\\
				L\lrb{\widehat{\mathrm{Tr}}_{n}(D,\psi_J;\t)} &= -\pi m \widehat{\mathrm{Tr}}_{n-2}(D,\psi_J;\t).
			\end{align*}
		\end{enumerate}
	\end{thm}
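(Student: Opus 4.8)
The plan is to assemble the completed traces into a single two-variable generating function and to read off all three conclusions from the Jacobi automorphy of $\phi$. First I would sum \eqref{eq:Tr-hat} against $z^n$. Abbreviating $T_\ell:=(2\pi i)^{\ell+a}\mathrm{Tr}_\ell(D,\psi_J;\tau)$ and reindexing the double sum by $\ell=n-2j$, the exponential series factors out and \Cref{second} identifies the remaining sum, giving
\[
\Xi(z;\tau):=\sum_{n\ge0}\widehat{\mathrm{Tr}}_n(D,\psi_J;\tau)\,z^n=e^{\frac{\pi m z^2}{v}}\sum_{\ell\ge0}T_\ell z^\ell=e^{\frac{\pi m z^2}{v}}\,z^{-a}\,\frac{\phi(z;\tau)}{f_\phi(\tau)}.
\]
Since $z^{-a}\phi(z;\tau)$ is holomorphic near $z=0$ and the Gaussian is entire in $z$, the right-hand side is holomorphic in $z$ near the origin, so comparing coefficients of $z^n$ below is legitimate.

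The crux is that the completion factor $e^{\pi m z^2/v}$ is engineered to cancel the index of $\phi$. Using $\Im(\gamma\tau)=v|c\tau+d|^{-2}$ together with $\tfrac{\overline{c\tau+d}}{c\tau+d}=1-\tfrac{2icv}{c\tau+d}$, a short computation shows that under $z\mapsto z/(c\tau+d)$, $\tau\mapsto\gamma\tau$ (with $\gamma\in\SL_2(\Z)$ having bottom row $(c,d)$) the factor $e^{\pi m z^2/v}$ acquires exactly $e^{-2\pi i m cz^2/(c\tau+d)}$, the inverse of the exponential automorphy factor of the index-$m$ form $\phi$. Hence $e^{\pi m z^2/v}\phi$ transforms with weight $k$ and \emph{trivial} index, and combining this with $f_\phi(\gamma\tau)=(c\tau+d)^{k+a}f_\phi(\tau)$ and the factor $(c\tau+d)^a$ produced by $z^{-a}$ makes $\Xi$ fully invariant, $\Xi(z/(c\tau+d);\gamma\tau)=\Xi(z;\tau)$. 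Expanding in $z$ and matching the coefficient of $z^n$ yields $\widehat{\mathrm{Tr}}_n(D,\psi_J;\gamma\tau)=(c\tau+d)^n\widehat{\mathrm{Tr}}_n(D,\psi_J;\tau)$, the weight-$n$ transformation law. The almost-holomorphic structure and the depth bound are then read off \eqref{eq:Tr-hat}: each $\mathrm{Tr}_\ell$ is holomorphic in $\tau$ (it is a Taylor coefficient of the $\tau$-holomorphic function $z^{-a}\phi/f_\phi$), so $\widehat{\mathrm{Tr}}_n$ is a polynomial in $1/v$ of degree at most $\lfloor n/2\rfloor$ with holomorphic coefficients; letting $\overline\tau\to-i\infty$ sends $1/v\to0$ and annihilates every term with $j\ge1$, leaving $(2\pi i)^{n+a}\mathrm{Tr}_n$. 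This proves part (1).

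For part (2) the two initial values come from evaluating \eqref{eq:trace} on the unique partitions of $0$ and $1$: the empty partition gives $\mathrm{Tr}_0=1$ and hence $\widehat{\mathrm{Tr}}_0=(2\pi i)^a$, while $\lambda=(1)$ gives $\psi_J((1))=-1$ and $\mathrm{Tr}_1=-G_{1,D}$, whence $\widehat{\mathrm{Tr}}_1=-(2\pi i)^{1+a}G_{1,D}$. The lowering relation is a termwise application of $L=-2iv^2\partial_{\overline\tau}$: since the $\mathrm{Tr}_\ell$ are $\overline\tau$-independent, $L$ acts only on the powers $v^{-j}$, and from $\partial_{\overline\tau}v=\tfrac{i}{2}$ one finds $L(v^{-j})=-j\,v^{-(j-1)}$. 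Feeding this into \eqref{eq:Tr-hat} and shifting the summation index $j\mapsto j+1$ collapses the result to $-\pi m\,\widehat{\mathrm{Tr}}_{n-2}(D,\psi_J;\tau)$.

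The step I expect to demand the most care is pinning down the sign and normalization in the Gaussian cancellation, so that $e^{\pi m z^2/v}$ is a genuine inverse of the automorphy factor of $\phi$ and the completed object really carries index $0$ (and not, say, $2m$); this must be matched against the precise convention for the theta function in \eqref{E:JacobiTheta}. Once that normalization is fixed, the reindexing in the generating function, the holomorphy of the $\mathrm{Tr}_\ell$, and the lowering identity are all routine bookkeeping.
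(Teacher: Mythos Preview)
Your proposal is correct and follows essentially the same route as the paper: both package the $\widehat{\mathrm{Tr}}_n$ as Taylor coefficients of $e^{\pi m z^2/v}\phi(z;\tau)$ (the paper calls this $\phi^*$ and keeps the factor $f_\phi$ separate rather than dividing it out), use the Gaussian to cancel the index-$m$ automorphy factor, and then compare coefficients. The only cosmetic differences are that the paper cites \cite[Proposition~3.1]{B2018} for the index cancellation you carry out by hand, and obtains the lowering relation by applying $L$ to the generating function $\phi^*$ rather than termwise to \eqref{eq:Tr-hat}; both routes are equivalent.
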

	Next, we study the following two sequences of $q$-series considered by Ramanujan in his lost notebook \cite[pp~369]{R1998} $(n\in\N)$
\begin{align*}
	U_{2n}(q) &= \frac{1^{2n+1} - 3^{2n+1}q + 5^{2n+1}q^3 - 7^{2n+1}q^6 + 9^{2n+1}q^{10} -\cdots}{1-3q+5q^3-7q^6+9q^{10}-\cdots}, \\
	V_{2n}(q) &= \frac{1^{2n}-5^{2n}q-7^{2n}q^2+11^{2n}q^5+13^{2n}q^7-\cdots}{1-q-q^2+q^5+q^7-\cdots}.
\end{align*}
Ramanujan offered many identities such as ($q :=e^{2\pi i\t}$)
\begin{align*}
	U_0(q)&=1,\quad U_{2}(q)=E_2(\t),\quad U_4(q)=\frac{1}{3}\left(5E_2^2(\t)-2E_4(\t)\right),\\ U_6(q)&=\frac{1}{9}\left(35E_2^3(\t)-42E_2(\t)E_4(\t)+16E_6(\t)\right), \cdots,\\
	\intertext{and}
	V_0(q)&=1,\quad V_{2}(q)=E_2(\t),\quad V_4(q)=3E_2^2(\t)-2E_4(\t),\\
	V_6(q)&=15E_2^3(\t)-30E_2(\t)E_4(\t)+16E_6(\t),\cdots ,
\end{align*}
where $E_{2k}$ are the normalized Eisenstein series defined in \eqref{eq:Ek}. Ramanujan further claimed that for $n\in\N$, $U_{2n}$ and $V_{2n}$ can be written in terms of $E_2, E_4,$ and $E_6$. Berndt, Chan, Liu, Yee, and Yesilyurt \cite{BCLY2004,BY2003} proved this claim. However, their result was not explicit.
This was recently made explicit in Theorem 1.2 of \cite{AOS2024} by writing $U_{2n}$ and $V_{2n}$ as partition Eisenstein trace. Define $\psi_1,\psi_2:\mathcal{P}\to \R$, for a partition $\lambda\vdash n$ by
\begin{align*}
	\psi_1(\lambda)&:=4^n(2n+1)!\prod_{k=1}^n\frac{1}{m_k!}\lrb{\frac{B_{2k}}{2k(2k)!}}^{m_k}\!, \\	\psi_2(\lambda)&:=4^n(2n)!\prod_{k=1}^n\frac{1}{m_k!}\lrb{\frac{\left(4^k-1\right)B_{2k}}{2k(2k)!}}^{m_k}\!.
\end{align*}
In \cite[Theorem~1.2]{AOS2024} it was shown that
\begin{align}\label{eq:U-V-Tr}
	U_{2n}(q)=\mathrm{Tr}_n(\psi_1;\t),\quad V_{2n}(q)=\mathrm{Tr}_n(\psi_2;\t),
\end{align}
	where, for any $\psi:\mathcal{P}\to \mathbb{C}$, we define
	\begin{align*}
		\mathrm{Tr}_n(\psi;\t) := \sum_{\lambda\vdash n} \psi(\lambda) E_{2}^{m_1}(\t)E_4^{m_2}(\t)\cdots E_{2n}^{m_n}(\t).
	\end{align*}
\begin{rem}
	The partition Eisenstein series considered here should not be confused with the beautiful functions of the same name studied by Just and Schneider \cite{JS2021}. In their work the partition Eisenstein series are defined as lattice sums that are dictated by partitions.
\end{rem}
As our next result, we determine the space in which $U_{2n}$ and $V_{2n}$ lie. Define
\begin{align}
	\widehat{U}_{2n}(\tau)&:=2i\sum_{0\le j\le n} (-1)^{n+j} \frac{\pi^{2n-j+1} U_{2n-2j}(q)}{(2v)^j j!(2n-2j+1)!},\label{eq:Uhat}\\
	\widehat{V}_{2n}(\t)&:=\sum_{0\le j\le n} (-1)^{n+j}\frac{ 3^j \pi^{2n-j} V_{2n-2j}(q)}{(2v)^j j!(2n-2j)!}.\label{eq:V}
\end{align}
	\begin{thm}\label{Ramanujan-U-V}\hspace{0cm}

		\begin{enumerate}[leftmargin=*,label=\rm(\arabic*)]
			\item The functions $\widehat{U}_{2n}$ and $\widehat{V}_{2n}$ are almost holomorphic modular forms of weight $2n$ and depth $n$.
			\item We have $\widehat{U}_0=2\pi i$, $\widehat{V}_0=1$, and
			\begin{align*}
				L\lrb{\widehat{U}_{2n}}=-\frac{\pi}{2}\widehat{U}_{2n-2},\quad L\lrb{\widehat{V}_{2n}}=-\frac{3\pi}{2}\widehat{V}_{2n-2}.
			\end{align*}
			\item We have 
			\begin{equation*}
				\hspace{0.6cm}\lim_{\overline{\t}\to-i\infty} \widehat U_{2n}(\tau) = \frac{ 2(\pi i)^{2n+1}U_{2n}(q)}{(2n+1)!}, \, \lim_{\overline{\t}\to-i\infty} \widehat V_{2n}(\tau) = \frac{(\pi i)^{2n}}{(2n)!} V_{2n}(q).
			\end{equation*}
		\end{enumerate}
	\end{thm}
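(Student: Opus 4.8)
The plan is to identify $\widehat U_{2n}$ and $\widehat V_{2n}$ with the canonical almost holomorphic completions of the quasimodular forms $U_{2n}$ and $V_{2n}$, and to reduce the entire theorem to a single recursion under $\frac{\partial}{\partial E_2}$. First, recall from \eqref{eq:U-V-Tr} that $U_{2n}=\mathrm{Tr}_n(\psi_1)$ and $V_{2n}=\mathrm{Tr}_n(\psi_2)$ are each a linear combination over $\lambda\vdash n$ of the monomials $E_2^{m_1}E_4^{m_2}\cdots E_{2n}^{m_n}$. Since $E_{2k}$ has weight $2k$, every such monomial has weight $2n$, so $U_{2n}$ and $V_{2n}$ are quasimodular of weight $2n$; and since $E_{2k}$ for $k\ge2$ is holomorphic modular (a polynomial in $E_4,E_6$ with no $E_2$), the only $E_2$-dependence sits in the factor $E_2^{m_1}$. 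The partition $(1^n)$ contributes $E_2^n$ with nonzero coefficient, so the depth is exactly $n$. By the standard correspondence between quasimodular and almost holomorphic modular forms \cite{BGHZ2008}, each has a unique completion of the same weight and depth, obtained by $E_2\mapsto\widehat E_2:=E_2-\frac{3}{\pi v}$. The goal is to show that $\widehat U_{2n}$ and $\widehat V_{2n}$ are exactly these completions, up to the explicit constants in part~(3).

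The crux is the recursion
\[
\frac{\partial U_{2n}}{\partial E_2}=\frac{n(2n+1)}{3}\,U_{2n-2}, \qquad \frac{\partial V_{2n}}{\partial E_2}=n(2n-1)\,V_{2n-2}.
\]
Because $\frac{\partial}{\partial E_2}$ applied to $E_2^{m_1}E_4^{m_2}\cdots$ lowers the exponent $m_1$ by one, on the trace side it corresponds to deleting a part of size $1$, i.e.\ to the map $\lambda=\mu\cup\{1\}\mapsto\mu$ from partitions of $n$ with $m_1\ge1$ to partitions of $n-1$. Thus the recursion reduces to the combinatorial identity $(m_1(\mu)+1)\,\psi_i(\mu\cup\{1\})=c_i(n)\,\psi_i(\mu)$ for every $\mu\vdash n-1$, with $c_1(n)=\frac{n(2n+1)}{3}$ and $c_2(n)=n(2n-1)$. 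I would verify this directly from the closed forms of $\psi_1,\psi_2$: the ratio $\psi_i(\mu\cup\{1\})/\psi_i(\mu)$ involves only the change of the global prefactors ($4^{n-1}\to4^n$ and $(2n-1)!\to(2n+1)!$, resp.\ $(2n-2)!\to(2n)!$) together with the $k=1$ factor $\frac{1}{m_1!}\big(\tfrac{B_2}{2\cdot2!}\big)^{m_1}$ (resp.\ with $\frac{(4-1)B_2}{2\cdot2!}$). Using $B_2=\tfrac16$, so that these constants equal $\tfrac1{24}$ and $\tfrac18$ respectively, the factorials telescope, the dependence on all parts of $\mu$ of size $\ge2$ cancels, and the remaining constant is exactly $c_i(n)/(m_1(\mu)+1)$.

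Granting the recursion, iterating it gives $\frac{\partial^{j}U_{2n}}{\partial E_2^{j}}=\frac{(2n+1)!}{6^{j}(2n-2j+1)!}\,U_{2n-2j}$ and the analog for $V$. Taylor expanding the substitution $E_2\mapsto E_2-\tfrac{3}{\pi v}$ in the polynomial $U_{2n}$,
\[
U_{2n}\Big|_{E_2\mapsto E_2-\frac{3}{\pi v}}=\sum_{j\ge0}\frac{1}{j!}\left(-\frac{3}{\pi v}\right)^{j}\frac{\partial^{j}U_{2n}}{\partial E_2^{j}},
\]
and inserting the iterated recursion reproduces the coefficients of \eqref{eq:Uhat} (after simplifying $(\pi i)^{2n+1}=(-1)^{n}i\,\pi^{2n+1}$ and $3^{j}/6^{j}=2^{-j}$); the same with $c_2(n)$ yields \eqref{eq:V}. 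This exhibits $\widehat U_{2n}$ and $\widehat V_{2n}$ as polynomials in $\widehat E_2,E_4,E_6$, which are almost holomorphic modular of weight $2n$, depth $n$ (the surviving $\widehat E_2^{\,n}$ term gives depth exactly $n$); this is part~(1). For part~(2), note $L\widehat E_2=\frac{3}{\pi}$ while $LE_4=LE_6=0$, so $L$ acts on completions as $\frac{3}{\pi}\frac{\partial}{\partial E_2}$; combined with the recursion and the normalizing constants this gives $L(\widehat U_{2n})=-\frac{\pi}{2}\widehat U_{2n-2}$ and $L(\widehat V_{2n})=-\frac{3\pi}{2}\widehat V_{2n-2}$, with $\widehat U_0=2\pi i$, $\widehat V_0=1$ read off from \eqref{eq:Uhat}--\eqref{eq:V} at $n=0$. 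For part~(3), letting $\overline\tau\to-i\infty$ sends $\frac1v\to0$, so only the $j=0$ term survives, producing the stated holomorphic parts.

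I expect the main obstacle to be the combinatorial recursion lemma, i.e.\ confirming that the elaborate weights $\psi_1,\psi_2$ are engineered so that $\frac{\partial}{\partial E_2}$ closes up cleanly. The delicate point is that the ratio $(m_1(\mu)+1)\,\psi_i(\mu\cup\{1\})/\psi_i(\mu)$ must be \emph{independent} of all parts of $\mu$ of size $\ge2$; this works precisely because deleting a $1$ alters only the $k=1$ factor and the global prefactors, and the value $B_2/(2\cdot2!)=\tfrac1{24}$ (resp.\ $\tfrac18$) makes the surviving constant equal to $c_i(n)$. Everything else --- the Taylor expansion, the $L$-action via $L\widehat E_2=\frac{3}{\pi}$, and the limits --- is then routine bookkeeping.
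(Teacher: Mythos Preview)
Your argument is correct and complete, but it takes a genuinely different route from the paper. The paper packages the sequences into generating functions
\[
U(z;\tau)=\sum_{n\ge0}(-1)^nU_{2n}(q)\frac{(\pi z)^{2n+1}}{(2n+1)!},\qquad V(z;\tau)=\sum_{n\ge0}(-1)^nV_{2n}(q)\frac{(\pi z)^{2n}}{(2n)!},
\]
uses the P\'olya cycle-index identity (Lemma~\ref{lem:cycle-index}) to sum the traces into an exponential, and then identifies $U(z;\tau)=\frac{\vartheta(z;\tau)}{-2\eta^3(\tau)}$ and $V(z;\tau)=\frac{\vartheta(2z;\tau)}{2\vartheta(z;\tau)}$ as Jacobi forms of indices $\tfrac12$ and $\tfrac32$. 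For $U$ the result then falls out of Theorem~\ref{Jacobi} (with $D=[0]$, $a=1$), and for $V$ the paper argues directly by multiplying by $e^{3\pi z^2/(2v)}$ and comparing Taylor coefficients. By contrast, you work entirely inside the quasimodular ring $\mathbb C[E_2,E_4,E_6]$: the combinatorial recursion $(m_1(\mu)+1)\,\psi_i(\mu\cup\{1\})=c_i(n)\,\psi_i(\mu)$ is exactly the statement that $\partial_{E_2}$ maps $U_{2n}$ (resp.\ $V_{2n}$) to a scalar multiple of $U_{2n-2}$ (resp.\ $V_{2n-2}$), and the Taylor expansion in $E_2\mapsto E_2-\tfrac{3}{\pi v}$ then reproduces \eqref{eq:Uhat}--\eqref{eq:V} on the nose. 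Your approach is more elementary and self-contained (it avoids Jacobi forms and Theorem~\ref{Jacobi} altogether, and makes transparent \emph{why} the weights $\psi_1,\psi_2$ are designed so that the depth-lowering operator closes up). The paper's approach, on the other hand, yields the explicit theta identities for the generating functions as a byproduct and situates the result within the general Jacobi framework of Theorem~\ref{Jacobi}; it also explains the constants $-\tfrac{\pi}{2}$ and $-\tfrac{3\pi}{2}$ in part~(2) as $-\pi m$ for the respective indices $m=\tfrac12,\tfrac32$, a structural fact your computation recovers but does not illuminate.
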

	The paper is organized as follows. In Section~\ref{sec-pre}, we recall some basic facts about Eisenstein series, Jacobi forms, and P\'{o}lya cycle index polynomials. In Sections~\ref{sec-thm-1}, \ref{sec-thm-3}, and \ref{sec-thm-4}, we prove Theorems \ref{main}, \ref{Jacobi}, and \ref{Ramanujan-U-V}, respectively.

	\section*{Acknowledgements}
	The authors thank Ken Ono for helpful discussions. The authors have received funding from the European Research Council (ERC) under the European Union's Horizon 2020 research and innovation programme (grant agreement No. 101001179).
	\section{Preliminaries}\label{sec-pre}
	\subsection{Eisenstein series}
	The \emph{Eisenstein series of weight $k$} are defined as
	\begin{align}\label{eq:G-2k}
	G_{2k}(\tau):=-\frac{B_{2k}}{2k}+2\sum_{n\ge1}\sigma_{2k-1}(n)q^n,
	\end{align}
	where $B_\ell$ denotes the $\ell$-th Bernoulli number and $\sigma_\ell(n):=\sum_{d\mid n}d^\ell$ the \emph{$\ell$-th divisor sum}. For $k>1$ the Eisenstein series are modular forms whereas $G_2$ can be completed by adding a simple term. That is defining
	\begin{equation}\label{Gh}
		\widehat{G}_2(\tau):=G_2(\tau)+\frac{1}{4\pi v}
	\end{equation}
	we have for $
	\begin{psmallmatrix}
		a & b\\
		c & d
	\end{psmallmatrix}
	\in\SL_2(\IZ)$, for $k\ge 2$,
	\begin{equation}\label{E:Trans}
		G_{2k}\left(\frac{a\tau+b}{c\tau+d}\right) = (c\tau+d)^{2k} G_{2k}(\tau), \quad \widehat{G}_2\left(\frac{a\tau+b}{c\tau+d}\right) = (c\tau+d)^2 \widehat{G}_2(\tau).
	\end{equation}
	We also require the {\it normalized Eisenstein series}
	\begin{equation}\label{eq:Ek}
		E_{2k}(\tau) := -\frac{2k}{B_{2k}}G_{2k}(\tau).
	\end{equation}
	\subsection{Jacobi forms}\label{subsec:Jacobi-form}
	\begin{defn}
		A {\it holomorphic Jacobi form of weight $k$ and index $m$} ($k\in \N, m \in \frac{1}{2}\mathbb N$) is a holomorphic function
		$\phi:\mathbb C \times \mathbb H \to
		\mathbb C$ which, for all $\left(\begin{smallmatrix}a&b\\c&d\end{smallmatrix}\right) \in \SL_2(\Z)$ and $\lambda,\mu \in \mathbb Z$, satisfies ($\zeta:=e^{2\pi iz}$):
		\begin{enumerate}[leftmargin=*]
			\item $\phi\left(\frac{z}{c\tau + d};\frac{a\tau+b}{c\tau+d}\right) = (c\tau + d)^k e^{\frac{2\pi i m c z^2}{c\tau + d}} \phi(z;\tau)$,
			\item $\phi(z + \lambda \tau + \mu;\tau) = (-1)^{2m\mu+\lambda \varepsilon}e^{-2\pi i m (\lambda^2\tau + 2\lambda z)} \phi(z;\tau)$, for some $\varepsilon\in\{0,1\}$.
			\item The function $\phi$ has a Fourier expansion $\phi(z;\tau)=\sum_{n\ge 0, r\in \Z} c(n,r)q^n \zeta^r$ with $c(n,r)=0$ unless $r^2\le 4nm$.
		\end{enumerate}
		Moreover, we call a Jacobi form $\phi$ {\it meromorphic} if together with (1) and (2), we allow it to have poles in $z$, and where the expansions in (3) allow for finite order poles in $\t$ at cusps. In this case $k$ can also be non-positive.
	\end{defn}
	The {\it Jacobi theta function} is a Jacobi form of weight $\frac12$ and index $\frac12$ (under a slightly modified definition)
	\begin{equation}\label{E:JacobiTheta}
		\vartheta(z;\tau) := \sum_{n\in\Z+\frac12} e^{2\pi in\left(z+\frac12\right)} q^\frac{n^2}2.
	\end{equation}
	To be more precise, it satisfies the elliptic transformation ($\lambda$, $\mu\in\Z$)
	\begin{equation}\label{E:TEl}
		\vartheta(z+\lambda\tau+\mu;\tau) = (-1)^{\lambda+\mu} q^{-\frac{\lambda^2}2} \zeta^{-\lambda} \vartheta(z;\tau)
	\end{equation}
	and the modular transformation ($
	\begin{psmallmatrix}
		a&b\\
		c&d
	\end{psmallmatrix}
	\in\SL_2(\Z)$)
	\begin{equation}\label{E:TMod}
		\vartheta\left(\frac z{c\tau+b};\frac{a\tau+b}{c\tau+d}\right) = \nu_\eta^3
		\begin{pmatrix}
			a&b\\
			c&d
		\end{pmatrix}
		(c\tau+d)^\frac12 e^\frac{\pi icz^2}{c\tau+d} \vartheta(z;\tau).
	\end{equation}
	Here $\nu_\eta
	\begin{psmallmatrix}
		a&b\\
		c&d
	\end{psmallmatrix}
	$ is the multiplier of the {\it Dedekind $\eta$-function}
	\begin{equation*}
		\eta(\t):=q^{\frac{1}{24}}\prod_{n\ge 1}(1-q^n),
	\end{equation*}
	i.e.,
	\begin{equation}\label{eq:eta}
		\eta\left(\frac{a\tau+b}{c\tau+d}\right) = \nu_\eta
		\begin{pmatrix}
			a&b\\
			c&d
		\end{pmatrix}
		(c\tau+d)^\frac12 \eta(\tau).
	\end{equation}
	Note that $z\mapsto\vartheta(z;\tau)$ has roots precisely for $z\in\IZ+\IZ\tau$ and these are simple.
	\subsection{P\'{o}lya cycle index polynomials}
	We require a result for P\'{o}lya cycle index polynomials in the case of symmetric groups that is required to prove our results; see \cite{S1999} for more details on cycle index polynomials. Let $S_n$ be the symmetric group; the set of permutations of the symbols $x_1,x_2,\ldots,x_n$. The \emph{cycle index polynomial} for $S_n$ is given by
	\begin{align*}
		Z(S_n):= \sum_{\lambda\vdash n} \prod_{k=1}^n\frac{1}{m_k!} \lrb{\frac{x_k}{k}}^{m_k}.
	\end{align*}
	\begin{lem}[Example 5.2.10 of \cite{S1999}]\label{lem:cycle-index}
		We have
		\begin{align*}
			\sum_{n\ge 0} Z(S_n) w^n = \exp\lrb{\sum_{k\ge 1} x_k \frac{w^k}{k}},
		\end{align*}
		as a formal power series in $w$.
	\end{lem}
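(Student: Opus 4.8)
The plan is to expand the right-hand side directly as a formal power series in $w$ and to read off the coefficient of $w^n$, checking that it coincides with the definition of $Z(S_n)$. First I would factor the exponential of the sum into a product of exponentials,
\[
\exp\left(\sum_{k\ge 1} x_k \frac{w^k}{k}\right) = \prod_{k\ge 1} \exp\left(\frac{x_k w^k}{k}\right).
\]
This is a legitimate identity of formal power series because, for each fixed $n$, only the factors with $k\le n$ can contribute to the coefficient of $w^n$, so every such coefficient is computed from a finite product.

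Next I would expand each factor into its own power series,
\[
\exp\left(\frac{x_k w^k}{k}\right) = \sum_{m_k\ge 0} \frac{1}{m_k!}\left(\frac{x_k}{k}\right)^{m_k} w^{k m_k},
\]
and multiply these out. A generic term of the resulting product is indexed by a sequence $(m_1,m_2,\ldots)$ of nonnegative integers, almost all of which vanish, and it equals
\[
\left(\prod_{k\ge 1} \frac{1}{m_k!}\left(\frac{x_k}{k}\right)^{m_k}\right) w^{\sum_{k\ge 1} k m_k}.
\]

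Finally I would collect these terms according to their total degree $n=\sum_{k\ge 1} k m_k$ in $w$. The constraint $\sum_{k\ge 1} k m_k = n$ is exactly the condition that $(1^{m_1},2^{m_2},\ldots,n^{m_n})$ is a partition $\lambda\vdash n$; under this reparametrization the coefficient of $w^n$ becomes
\[
\sum_{\lambda\vdash n}\prod_{k=1}^n \frac{1}{m_k!}\left(\frac{x_k}{k}\right)^{m_k},
\]
which is precisely $Z(S_n)$. Comparing coefficients of $w^n$ on both sides then gives the claim. There is no genuine obstacle here: the only point deserving a word of care is the bijection between sequences $(m_k)_{k\ge 1}$ with $\sum_k k m_k = n$ and partitions $\lambda\vdash n$, together with the observation that each coefficient extraction involves only a finite sum, so all rearrangements of the formal power series are justified.
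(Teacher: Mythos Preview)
Your argument is correct and is the standard proof of this identity. The paper itself does not supply a proof; it simply quotes the result as Example~5.2.10 of Stanley's \emph{Enumerative Combinatorics, Vol.~2}, so there is nothing to compare against beyond noting that your direct expansion is exactly the computation underlying that reference.
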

	\section{Proof of Theorem \ref{main}}\label{sec-thm-1}
	In this section we prove Theorem 1.1.
	\begin{proof}[Proof of Theorem 1.1]\hspace{0cm}

		\noindent(1) The modular transformation property follows directly from Theorem~1.1 (1) of \cite{AGOS}.  For the readers convenience however we give a complete proof. We have
		\begin{equation*}
			e_k(\Lambda_\tau(0)) = \lim_{s\to0^+} e_k(\Lambda_\tau(s)).
		\end{equation*}
		Consider the generating function (see e.g. (3.1) of \cite{AGOS})
		\begin{align}
			\phi^*(z;\tau) &:= \sum_{k\ge0} (-1)^k e_k(\Lambda_\tau(0)) z^{2k+1} = \lim_{s\to0^+} \sum_{k\ge0} (-1)^k e_k(\Lambda_\tau(s)) z^{2k+1} \label{Fex}\\
			&= \lim_{s\to0^+} \sigma_s(z;\tau)= z\exp\left(-\widehat{G}_2(\tau)\frac{(2\pi iz)^2}2-\sum_{k\ge2}G_{2k}(\tau)\frac{(2\pi iz)^{2k}}{(2k)!}\right),\nonumber
		\end{align}
		where the last equality comes from \cite[equation~(3.3)]{AGOS}.
		Using equation (\ref{E:Trans}) gives that
		\begin{equation*}
			\phi^*\left(\frac z{c\tau+d};\frac{a\tau+b}{c\tau+d}\right) = (c\tau+d)^{-1} \phi^*(z;\tau).
		\end{equation*}
		Thus we have
		\begin{equation*}
			\sum_{k\ge1} (-1)^k e_k\left(\Lambda_{\frac{a\tau+b}{c\tau+d}}(0)\right) \left(\frac z{c\tau+d}\right)^{2k+1} = \frac1{c\tau+d} \sum_{k\ge1} (-1)^k e_k(\Lambda_\tau(0)) z^{2k+1}.
		\end{equation*}
		Comparing coefficients yields that
		\begin{equation*}
			e_k\lrb{\Lambda_{\frac{a\tau+b}{c\tau+d}}(0)} = (c\tau+d)^{2k} e_k\lrb{\Lambda_\tau(0)},
		\end{equation*}
		which is the claimed transformation law. The claim on the depth follows from part (2).

		\noindent(2) The fact that $e_0(\Lambda_\tau(0))=1$ is direct (for example by comparing coefficients in \eqref{Fex}). We next compute, using \eqref{Fex},
		\begin{align*}
			L(\phi^*(z;\tau)) ) &= L\left(z\exp\left(-\frac{\widehat{G}_2(\tau)}2(2\pi iz)^2-\sum_{k\ge2}\frac{G_{2k}(\tau)}{(2k)!}(2\pi iz)^{2k}\right)\right)\\
			&= \phi^*(z;\tau) \frac{-(2\pi iz)^2}2 L\lrb{\widehat{G}_2(\tau)} \hspace{-0.1cm} = -\frac{\pi z^2}2 \phi^*(z;\tau),
		\end{align*}
		using \eqref{Gh} and that we have, plugging into \eqref{Fex},
		\begin{equation}\label{eq:action-of-L-on-1/v}
			L\!\left(\frac1v\right) = -1.
		\end{equation}
		Thus we have, plugging into \eqref{Fex},
		\begin{equation*}
			\sum_{k\ge0} (-1)^k L\left(e_k(\Lambda_\tau(0))\right) z^{2k+1} = -\frac{\pi}2\sum_{k\ge0} (-1)^k e_k(\Lambda_\tau(0)) z^{2k+3}.
		\end{equation*}
		Comparing coefficients gives
		\begin{equation*}
			L(e_k(\Lambda_\tau(0))) = \frac{\pi}2e_{k-1}(\Lambda_\tau(0)),
		\end{equation*}
		as claimed.

		\noindent(3) We use \eqref{Fex}, \eqref{eq:action-of-L-on-1/v}, and then \cite[equation (7)]{Z1991} to write
		\begin{align}
			\phi(z;\tau) &= e^{-\frac{\pi z^2}{2v}} z\exp\left(-\widehat{G}_2(\tau)\frac{(2\pi iz)^2}2-\sum_{k\ge2}G_{2k}(\tau)\frac{(2\pi iz)^{2k}}{(2k)!}\right) \nonumber\\
			&= z\exp\left(-G_2(\tau)\frac{(2\pi iz)^2}2-\sum_{k\ge2}G_{2k}(\tau)\frac{(2\pi iz)^{2k}}{(2k)!}\right) = \frac{\vartheta(z;\tau)}{-2\pi\eta^3(\tau)}.\label{eq:theta}
		\end{align}
		From equation \eqref{E:TEl} we obtain, for $\lambda,\mu\in\Z$,
		\begin{align*}
			\phi(z+\lambda\tau+\mu;\tau) = (-1)^{\lambda+\mu} q^{-\frac{\lambda^2}2} \zeta^{-\lambda} \phi(z;\tau).
		\end{align*}
		Next, using \eqref{E:TMod} and \eqref{eq:eta} gives, for $
		\begin{psmallmatrix}
			a&b\\
			c&d
		\end{psmallmatrix}
		\in\SL_2(\Z)$, we have
		\begin{align*}
			\phi\left(\frac z{c\tau+b};\frac{a\tau+b}{c\tau+d}\right)
			&= (c\tau+d)^{-1} e^\frac{\pi icz^2}{c\tau+d} \phi(z;\tau).\qedhere
		\end{align*}
	\end{proof}
	\section{Proof of Theorem \ref{Jacobi}}\label{sec-thm-3}
	In this section we prove Theorem \ref{Jacobi}.

	\begin{proof}[Proof of Theorem \ref{Jacobi}]\hspace{0cm}\newline
	\noindent (1) Let $\phi(z;\t)$ be a meromorphic Jacobi form of weight $k$, index $m$, and torsion divisor $D$, and let $a$ be the order of $\phi(z;\t)$ at $z=0$. Then by \Cref{second}, there exist a weight $k+a$ modular form $f_\phi$ and Eisenstein-theta traces $\operatorname{Tr}_{n}(D,\psi_J;\t)$ such that
	\begin{align*}
		\phi(z;\tau) &= f_\phi(\tau) \sum_{n\ge0} \operatorname{Tr}_n (D,\psi_J;\tau) (2\pi iz)^{n+a}
	\end{align*}
	in a neighbourhood of $z=0$.
	We let
	\begin{equation}\label{PhisPhi}
		\phi^*(z;\tau) := e^\frac{\pi mz^2}v \phi(z;\tau).
	\end{equation}
	Then, in a neighbourhood of $0$, we have
	\begin{align}
		\phi^*(z;\tau) &= f_\phi(\tau) \sum_{j\ge0} \frac{\left(\frac{\pi mz^2}v\right)^j}{j!} \sum_{\ell\ge0} \operatorname{Tr}_\ell (D,\psi_J;\tau) (2\pi iz)^{\ell+a} \nonumber\\
		&= f_\phi(\tau) \sum_{n\ge0} z^{n+a} \sum_{\substack{j,\ell\ge0\\n=2j+\ell}} \frac{\left(\frac{\pi m}v\right)^j}{j!} (2\pi i)^{\ell+a}\operatorname{Tr}_\ell (D,\psi_J;\tau)\nonumber\\
		&= f_\phi(\tau) \sum_{n\ge0} c_{n}(\tau) z^{n+a},\label{dec}
	\end{align}
	where
	\begin{equation}\label{eq:c-tr}
		c_{n}(\tau) := \sum_{0\le j\le\frac{n}2} \frac{\left(\frac{\pi m}v\right)^j}{j!} (2\pi i)^{n-2j+a} \operatorname{Tr}_{n-2j} (D,\psi_J;\tau)= \widehat{\mathrm{Tr}}_{n}(D,\psi_J;\t),
	\end{equation}
	by definition \eqref{eq:Tr-hat}.

	We next prove modular transformation properties of $\widehat{\mathrm{Tr}}_{n}(D,\psi_J;\tau)$. It can be shown that $\phi^*(z;\tau)$ satisfies the modular transformation formula of a weight $k$ and index $0$ Jacobi form (see the proof of \cite[Proposition~3.1]{B2018}).
	We also have that $f_\phi$ is a modular form of weight $k+a$. Thus we obtain from \eqref{dec}
	\begin{align*}
		\phi^*\left(\frac z{c\tau+d};\frac{a\tau+b}{c\tau+d}\right) 
		&= f_\phi\left(\frac{a\tau+b}{c\tau+d}\right) \sum_{n\ge0} c_n\left(\frac{a\tau+b}{c\tau+d}\right) \left(\frac z{c\tau+d}\right)^{n+a} \\
		&=(c\tau+d)^{k+a} f_\phi(\tau)\sum_{n\ge0} c_n\left(\frac{a\tau+b}{c\tau+d}\right) \left(\frac z{c\tau+d}\right)^{n+a}\\
		&= (c\tau+d)^k \phi^*(z;\tau)= (c\tau+d)^k f_\phi(\tau) \sum_{n\ge0} c_n(\tau) z^{n+a}.
	\end{align*}
	Thus
	\begin{equation*}
		c_n\left(\frac{a\tau+b}{c\tau+d}\right) = (c\tau+d)^{n} c_n(\tau).
	\end{equation*}
	Using \eqref{eq:c-tr} this gives
	\begin{equation*}
		\widehat{\mathrm{Tr}}_{n}\left(D,\psi_J;\frac{a\tau+b}{c\tau+d}\right)=c_n\left(\frac{a\tau+b}{c\tau+d}\right) = (c\tau+d)^{n} c_n(\tau) = (c\tau+d)^{n}\widehat{\mathrm{Tr}}_{n}\left(D,\psi_J;\t\right).
	\end{equation*}
	Since by definition $\widehat{\mathrm{Tr}}_{n}$, is also a polynomial in $\frac{1}{v}$ with holomorphic coefficients and degree at most $\lfloor\frac{n}{2}\rfloor$, $\widehat{\mathrm{Tr}}_{n}$ is an almost holomorphic modular form of weight $n$ and depth at most $\lfloor\frac{n}{2}\rfloor$. By \eqref{eq:Tr-hat}, we have that
	\begin{align*}
		\lim_{\overline{\t}\to-i\infty} \widehat{\mathrm{Tr}}_{n}(D,\psi_J;\t) = (2\pi i)^{n+a} \mathrm{Tr}_{n}(D,\psi_J;\t).
	\end{align*}
	This proves (1).

	\noindent (2) Using \eqref{eq:c-tr} and the definition of $\operatorname{Tr}_n(D,\psi_J;\t)$ for $n=0,1$, we get
	\[
		\widehat{\mathrm{Tr}}_{0}\left(D,\psi_J;\t\right) = (2\pi i)^a, \quad \widehat{\mathrm{Tr}}_1(D,\psi_J;\t) = -(2\pi i)^{1+a}G_{1, D}(\t).
	\]
	We next compute the action of the lowering operator on $\phi^*$.
	We use \eqref{dec}, \eqref{PhisPhi}, and \eqref{eq:action-of-L-on-1/v} to get
	\begin{align*}
		f_\phi(\tau)\sum_{n\ge0} L\left(c_n(\tau)\right)z^{n+a+2}&=L\left(\phi^*(z;\tau)\right) =-\pi m\phi^*(z;\tau) z^2 \\
		&= -\pi mf_\phi(\tau) \sum_{n\ge0} c_n(\tau) z^{n+a+2}.
	\end{align*}
	Thus 
	\begin{equation*}
		L\left(c_n(\tau)\right)=-\pi mc_{n-2}(\tau).
	\end{equation*}
	Thus for $n\ge2$, we have, using \eqref{eq:c-tr},
	\begin{equation*}
		L\lrb{\widehat{\mathrm{Tr}}_{n}\left(D,\psi_J;\t\right)}=L(c_n(\tau)) = -\pi mc_{n-2}(\tau) = -\pi m \widehat{\mathrm{Tr}}_{n-2}\left(D,\psi_J;\t\right).
	\end{equation*}
	This completes the proof of (2).
	\end{proof}
	\section{Proof of \Cref{Ramanujan-U-V}}\label{sec-thm-4}
	We next construct almost holomorphic modular forms from $U_{2n}$ and $V_{2n}$.
	\begin{proof}[Proof of \Cref{Ramanujan-U-V}]
		We first prove  the results on $U_{2n}$.
		We use \eqref{eq:U-V-Tr} to write
		\begin{align}
				U(z;\t)&:=\sum_{n\ge 0} (-1)^nU_{2n}(q) \frac{(\pi z)^{2n+1}}{(2n+1)!}\nonumber\\
				&= \pi z\sum_{n\ge 0} \sum_{\lambda\vdash n} \prod_{k=1}^n\frac{1}{m_k!}\lrb{\frac{B_{2k}E_{2k}(\t)}{2k(2k)!}}^{m_k} (2\pi i z)^{2n}.\label{eq:U-as-cycleindex}
		\end{align}
		If we take $x_k=\frac{B_{k}E_k(\t)}{k!}$ and $w=2\pi iz$ in Lemma~\ref{lem:cycle-index}, then we get
	\begin{multline}\label{eq:cycle-index-as-exp}
		\sum_{n\ge 0} \sum_{\substack{\lambda\vdash n\\ \lambda=\lrb{1^{m_1},2^{m_2},\ldots,n^{m_n}}}}  \prod_{k=1}^n\frac{1}{m_k!}\lrb{\frac{B_{k} E_{k}(\t)}{k\cdot k!}}^{m_k} (2 \pi iz)^{n}\\
			 = \exp\left(\sum_{n\ge 1} \frac{B_{2n}E_{2n}(\t)}{2n}\frac{(2\pi iz)^{2n}}{(2n)!}\right),
	\end{multline}
	where we use that $E_{2n+1}(\t)=0$. We next also use this fact for the left-hand side. Note that we have that $\prod_{k=1}^n\frac{1}{m_k!}(\frac{B_{k} E_{k}(\t)}{k\cdot k!})^{m_k}=0$ if $m_k\neq0$ for some odd $k$ in the product, and in particular if $n$ is odd. Hence we can write the left-hand side as
		\begin{align*}
			\sum_{n\ge 0} \sum_{\substack{\lambda\vdash 2n\\ \lambda=\lrb{1^0,2^{m_2},3^0,4^{m_4},\ldots,(2n)^{m_{2n}}}}}\prod_{k=1}^{n} \frac1{m_{2k}!}\left(\frac{B_{2k}E_{2k}(\tau)}{2k(2k)!}\right)^{m_{2k}}(2\pi iz)^{2n} .
		\end{align*}
			We have that $\lambda=\lrb{1^0,2^{m_2},3^0,4^{m_4},\ldots,(2n)^{m_{2n}}}\vdash 2n$ is in one to one correspondence with $(1^{\mu_1},2^{\mu_2},\ldots,n^{\mu_n})\vdash n$ with $\mu_j=m_{2j}$, hence the above can be written as
		\begin{align}\label{eq:cycle-index}
			\sum_{n\ge 0} \sum_{\substack{\lambda\vdash n\\ \lambda=\lrb{1^{m_1},2^{m_2},\ldots,n^{m_n}}}}\prod_{k=1}^{n} \frac1{m_{k}!}\left(\frac{B_{2k}E_{2k}(\tau)}{2k(2k)!}\right)^{m_{k}}(2\pi iz)^{2n}.
		\end{align}
		Combining \eqref{eq:U-as-cycleindex}, \eqref{eq:cycle-index-as-exp}, and \eqref{eq:cycle-index} gives that
		\begin{align*}
				U(z;\t)=\pi z\exp\left(\sum_{n\ge 1} \frac{B_{2n}E_{2n}(\t)}{2n}\frac{(2\pi iz)^{2n}}{(2n)!}\right).
			\end{align*}
			Using \eqref{eq:theta}, this equals
			\begin{align*}
				\frac{\vartheta\lrb{z;\tau}}{-2\eta^3(\tau)},
			\end{align*}
			which is shown in the proof of \Cref{main} (3) to be a weight $-1$ and index $\frac{1}{2}$ Jacobi form. Next we prove that
			\begin{align}\label{eq:U-as-trace}
				U(z;\t) = \frac{1}{2i} \sum_{n\ge 0}\mathrm{Tr}_n([0],\psi_J;\t)(2\pi i z)^{n+1}.
			\end{align}
			Recall that we have $G_{2k,0}(\t)=G_{2k}(\t)$ and $G_{2k+1,0}(\t)=0$ by definition. By \eqref{eq:trace} we have
		\begin{align*}
			\mathrm{Tr}_{2n}([0],\psi_J;\t) &= \sum_{\lambda=(1^{m_1},2^{m_2},\ldots,(2n)^{m_{2n}})\vdash 2n} \psi_J(\lambda) \mathcal{G}_{[0],\lambda}(\t)\\
			&= \sum_{\lambda=(1^{\mu_1},2^{\mu_2},\ldots,n^{\mu_{n}})\vdash n} \frac{(-G_{2}(\t))^{\mu_1}(-G_4(\t))^{\mu_1}\cdots (-G_{2n}(\t))^{\mu_{n}}}{\prod_{j=1}^n \mu_{j}!(2j)!^{\mu_{j}}},
		\end{align*}
		where we again use that $\lambda=\lrb{1^0,2^{m_2},3^0,4^{m_4},\ldots,(2n)^{m_{2n}}}\vdash 2n$ is in one to one correspondence with $(1^{\mu_1},2^{\mu_2},\ldots,n^{\mu_n})\vdash n$ with $\mu_j=m_{2j}$.
		We use \eqref{eq:Ek} to write this as
		\begin{align*}
			\sum_{\lambda\vdash n} \prod_{j=1}^n \frac{( B_{2j}E_{2j}(\t))^{m_j}}{(2j)^{m_j} m_j!(2j)!^{m_j}}=\frac{\mathrm{Tr}_{n}(\psi_1;\t)}{4^n(2n+1)!}=
			\frac{U_{2n}(q)}{4^n(2n+1)!},
		\end{align*}
		where the first equality comes from the definition of $\mathrm{Tr}_{n}(\psi_1,\t)$ and the second from \eqref{eq:U-V-Tr}.
		Plugging this back into the definition of $U$ in \eqref{eq:U-as-cycleindex} gives \eqref{eq:U-as-trace}.
			Hence we are in the setting of \Cref{second} with $D=[0]$, $a=1$, and $f_\phi=\frac{1}{2i}$.
			A direct calculation shows that
			\begin{align*}
				\widehat{U}_{2n}(\tau) &= \widehat{\mathrm{Tr}}_{2n}([0],\psi_J;\t).
			\end{align*}
			By \Cref{Jacobi} (1), $\widehat{U}_{2n}$ is a weight $2n$ and depth at most $n$ almost holomorphic modular form satisfying
			\[
				\lim_{\overline{\t}\to-i\infty} \widehat U_{2n}(\tau) = 2i(-1)^n \frac{\pi^{2n+1}U_{2n}(q)}{(2n+1)!}.
			\]
			In this case the depth is exactly $n$ as by \eqref{eq:Uhat} the coefficient of $v^{-n}$ is $\frac{\pi^{n-1}iU_0(q)}{2^{n-1}n!} \neq 0$ since $U_0(q)=1$.
			This proves (1) and (3) for $U_{2n}$. By \Cref{Jacobi} (2), we have that $\widehat{U}_0=2\pi i$ and $L(\widehat{U}_{2n})=-\frac{\pi}{2}\widehat{U}_{2n-2}$ which proves (2) for $U_{2n}$.

			Next we show the results on $V_{2n}(q)$.
			We use \eqref{eq:U-V-Tr} to write
			\begin{align}
				V(z;\t):\!\!&=\sum_{n\ge 0} (-1)^nV_{2n}(q) \frac{(\pi z)^{2n}}{(2n)!}\label{eq:V-V-n}\\
				&= \sum_{n\ge 0} \sum_{\lambda\vdash n} \prod_{k=1}^n\frac{1}{m_k!}\lrb{\frac{\left(4^k-1\right)B_{2k} E_{2k}(\t)}{2k(2k)!}}^{m_k}  (2\pi iz)^{2n}\nonumber\\
				&=\exp\left(\sum_{n\ge 1}\left(4^n-1\right)\frac{B_{2n}E_{2n}(\t)}{2n}\frac{(2\pi iz)^{2n}}{(2n)!}\right) = \frac{\vartheta\lrb{2z;\t}}{2\vartheta\lrb{z;\t}},\nonumber
			\end{align}
			taking $x_k=\frac{(2^{k}-1)B_{k}E_k(\t)}{k!}$ and $w=2\pi iz$ in Lemma~\ref{lem:cycle-index} and applying a similar argument as for $U_{2n}$ and using \eqref{eq:theta}.

		Next we determine the transformation laws for $\frac{\vartheta\lrb{2z;\t}}{2\vartheta\lrb{z;\t}}.$ We start with the modular transformation. Using \eqref{E:TMod}, we have for $\left(\begin{smallmatrix}a&b\\c&d\end{smallmatrix}\right)\in\SL_2(\Z)$
		\begin{equation*}
			\frac{\vartheta\lrb{\frac{2z}{c\t+d};\frac{a\t+b}{c\t+d}}}{2\vartheta\lrb{\frac{z}{c\t+d};\frac{a\t+b}{c\t+d}}} = e^\frac{3\pi icz^2}{c\tau+d}  \frac{\vartheta(2z;\tau)}{2\vartheta(z;\tau)}.
		\end{equation*}
		Next we consider at the elliptic transformation. Using \eqref{E:TEl}, we have for $\lambda,\mu\in\Z$
		\begin{align*}
			\frac{\vartheta\lrb{2(z+\lambda\t+\mu);\t}}{2\vartheta\lrb{z+\lambda\t+\mu;\t}}
					&= (-1)^{\lambda+\mu} q^{-\frac{3\lambda^2}2} \zeta^{-3\lambda}\frac{\vartheta\lrb{2z;\t}}{2\vartheta\lrb{z;\t}}.
		\end{align*}
		So $\frac{\vartheta\lrb{2z;\t}}{2\vartheta\lrb{z;\t}}$, and hence $V(z;\t)$, is a Jacobi form of weight $0$ and index $\frac{3}{2}$. Using \eqref{eq:V-V-n}, we write
		\begin{align}
			V^*(z;\t)&:=e^{\frac{3\pi z^2}{2v}}V(z;\t) = \sum_{j\ge 0} \frac{\lrb{\frac{3\pi z^2}{2v}}^j}{j!} \sum_{n\ge 0} (-1)^nV_{2n}(q) \frac{(\pi z)^{2n}}{(2n)!}\nonumber\\[-0.3em]
			&= \sum_{n\ge 0} \widehat{V}_{2n}(\t) z^{2n}, \label{eq:V-hat}
		\end{align}
		where the last equality follows from definition \eqref{eq:V}. It can be shown that $V^*(z;\t)$ satisfies the modular transforms like  a weight $0$ and index $0$ Jacobi form (see the proof of \cite[Proposition~3.1]{B2018}).
		Using \eqref{eq:V-hat} with this gives
		\begin{align*}
			\sum_{n\ge 0} \widehat{V}_{2n}\lrb{\frac{a\tau+b}{c\tau+d}} \lrb{\frac{z}{c\tau + d}}^{2n}&=V^*\lrb{\frac{z}{c\tau + d};\frac{a\tau+b}{c\tau+d}}=V^*(z;\t)\\&=\sum_{n\ge 0} \widehat{V}_{2n}(\t) z^{2n}.
		\end{align*}
		Comparing the $(2n)$-th coefficient gives that 
		\begin{align*}
			\widehat{V}_{2n}\lrb{\frac{a\tau+b}{c\tau+d}} = (c\t+d)^{2n} \widehat{V}_{2n}(\tau).
		\end{align*}
		Also by definition $\widehat{V}_{2n}(\t)$ is a polynomial in $\frac{1}{v}$ with holomorphic coefficients and degree $n$. This is true because the term corresponding to $v^{-n}$ is $\frac{3^n \pi^n V_0(q)}{2^nn!} \neq0$ since $V_0(q)=1$. Hence $\widehat{V}_{2n}$ is an almost holomorphic modular form of weight $2n$ and depth $n$. This proves part (1) for $V$. To prove part (2) for $V$, first note that by definition $\widehat{V}_0=1$. We use \eqref{eq:action-of-L-on-1/v} to get
		\[
			L\lrb{V^*(z;\t)}=L\lrb{e^{\frac{3\pi z^2}{2v}}V(z;\t)} = -\frac{3\pi}{2}z^2 e^{\frac{3\pi z^2}{2v}}V(z;\t)= -\frac{3\pi}{2}z^2 V^*(z;\t).
		\]
		Combining this with \eqref{eq:V-hat} then gives
		\begin{align*}
			\sum_{n\ge 0} L\lrb{\widehat{V}_{2n}(\t)} z^{2n} = -\frac{3\pi}{2} \sum_{n\ge 0} \widehat{V}_{2n}(\t) z^{2n+2}.
		\end{align*}
		Comparing the  $(2n)$-th coefficient gives part (2) for $V$.
	Part (3) for $V$ follows by taking the limit $\overline{\t}\to i\infty$ in the definition of $\widehat{V}_{2n}$.
	\end{proof}
	

\begin{thebibliography}{99}
		\bibitem{AGOS} T. Amdeberhan, M. Griffin, K. Ono, and A. Singh, \emph{Traces of partition Eisenstein series}, {\tt arXiv:2408.08807}.
		\bibitem{AOS2024} T. Amdeberhan, K. Ono, and A. Singh, \emph{Derivatives of theta functions as traces of partition Eisenstein series}, {\tt arXiv:2407.08437}.
		\bibitem{BCLY2004} B. Berndt, S. Chan, Z. Liu, and H. Yesilyurt, \emph{A new identity for $(q; q)^{10}_\infty$ with an application to Ramanujan’s partition congruence modulo 11}, Quart. J. Math. {\bf55}, 13--30, (2004).
		\bibitem{BY2003} B. Berndt and A. Yee, \emph{A page on Eisenstein series in Ramanujan’s lost notebook}, Glasgow Math. J. {\bf45}, 123--129, (2003).
		\bibitem{B2018} K. Bringmann, \emph{Taylor coefficients of non-holomorphic Jacobi forms and applications}, Res Math Sci {\bf5}, 15, (2018).
		\bibitem{BGHZ2008} J. Bruinier, G. Geer, G. Harder, and D. Zagier, \emph{The 1-2-3 of modular forms: Lectures at a summer school in Nordfjordeid, Norway}, Springer-Verlag Berlin Heidelberg, (2008).
		\bibitem{EZ1985} M. Eichler and D. Zagier, \emph{The theory of Jacobi forms}, Prog. Math. {\bf55}, Birkh\"{a}user, Boston, (1985).
		\bibitem{JS2021} M. Just and R. Schneider, \emph{Partition Eisenstein series and semi-modular forms}, Res. number theory {\bf7}, 61 (2021).
		\bibitem{R1998} S. Ramanujan, \emph{The lost notebook and other unpublished papers}, (1988) New Delhi; Berlin, New York: Narosa Publishing House; Springer-Verlag, reprinted, (2008).
		\bibitem{S1986} J. Silverman, \emph{Arithmetic of elliptic curves}, Grad. Texts in Math., Springer-Verlag, New York, (2009).
		\bibitem{S1999} R. Stanley, \emph{Enumerative combinatorics. Vol. 2}, Cambridge Studies in Math. {\bf62}, Cambridge Univ. Press, Cambridge (1999).
		\bibitem{Z1991} D. Zagier, \emph{Periods of modular forms and Jacobi theta functions}, Invent. math. {\bf104}, 449--465, (1991).
	\end{thebibliography}
\end{document}